\newtheorem{thm}{Theorem}[section]
\newtheorem*{thx}{Theorem}
\newtheorem{lem}{Lemma}[section]
\newtheorem{rem}{Remark}[section]
\newtheorem{que}{Question}
\newtheorem{prop}{Proposition}[section]
\newtheorem{defi}{Definition}[section]
\newcommand{\abrack}[1]{[\mkern-3mu[#1]\mkern-3mu]}
\begin{document}
\title{Complex quaternionic manifolds and c-projective structures}
\author{Aleksandra Borówka}
\address[1]{Institute of Mathematics, Jagiellonian University in Krak\'ow\\Poland}
\email{aleksandra.borowka@uj.edu.pl}
\subjclass[2020]{53C26,53C28}
\keywords{Complex Quaternionic manifold, Quaternionic Feix--Kaledin construction, holonomy}

\begin{abstract}
We discuss complex quaternionic manifolds, i.e., those that have holonomy $GL(n,\mathbb{H})U(1)$, which naturally arise via quaternionic Feix--Kaledin construction. We show that for a fixed c-projective class, any real analytic connection with type $(1,1)$ curvature induces, via quaternionic Feix--Kaledin construction, an $S^1$-invariant connection with holonomy contained in  $GL(n,\mathbb{H})U(1)$. As an application, we characterize in this setting the distinguished $U^*(2n):=SL(n,\mathbb{H})U(1)$ connection studied in Battaglia \cite{Bat} and Hitchin \cite{Hit3}.
\end{abstract}
\maketitle
\section{Introduction}
A quaternionic manifold is a smooth manifold of dimension $4n$, $n>1$ such that it is equipped with a rank $3$
subbbundle $Q$ of the endomorphisms of $TM$ generated locally by three almost complex structures satisfying quaternionic relations and such that there exists a torsion-free connection $D$ with $DQ\subset Q$. Such connections, called quaternionic connections, form an affine space modeled on $1$-forms.

A natural question that one could ask is whether there always exists an integrable complex structure that is a section of $Q$. Globally this may not be the case as e.g., on $\mathbb{HP}^n$ for topological reasons there is no global almost complex structure (but see Pontecorvo \cite{Pon} for compact examples of quaternionic manifolds globally admitting such structure), but as shown by Alekseevsky et al. \cite{AMP}, locally such complex structures do exist.

On the other hand, for a fixed almost complex structure $I$, one could ask when there exists a quaternionic connection with $DI=0$. The holonomy group of such a connection is contained in $GL(n,\mathbb{H})U(1)$ and we call manifold equipped with such a connection a complex quaternionic manifold.  It is important to note that there is some ambiguity in the notation: some authors (Joyce, \cite{Joyce}, Hitchin \cite{Hit3}) define a  complex quaternionic  manifold as a manifold with holonomy $U^*(2n):=SL(n,\mathbb{H})U(1)$. This is equivalent to require that a quaternionic connection with $DI=0$ preserves also some volume form.

In Alekseevsky et al. \cite{AMP} it is shown that  a torsion free connection preserving a fixed almost complex structure is necessarily unique and exists exactly when the almost complex structure $I$ is integrable. However, it seems that this result has been somewhat overlooked (see for example, Thurman \cite{T}, Section 4.1).  The main aim of this note is to describe an interesting class of examples of $S^1$- invariant complex quaternionic structures that naturally arise from quaternionic Feix--Kaledin construction \cite{BC}. More precisely, we show
\begin{thx}
    Let $(S,J,[D])$ be a real analytic c-projective manifold with c-projective curvature of type $(1,1)$. Let $(\mathfrak{L},\nabla)$ be a holomorphic line bundle with a holomorphic connection with real analytic type $(1,1)$ curvature. Then locally near $S$, the quaternionic manifold $M$ obtained by quaternionic Feix--Kaledin construction from $(S,J,[D]_c,\mathfrak{L},\nabla) $  admits a family $J_D$ of compatible $S^1$-invariant complex structures parametrized by real analytic connections $D$ in the c-projective class with curvature of type $(1,1)$ and such that $J_D|_S=J$. 
\end{thx}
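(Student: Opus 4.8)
The plan is to work in the twistor space of $M$. Recall from \cite{BC} that the quaternionic Feix--Kaledin construction produces $M$ near $S$ together with its twistor space: a complex manifold $Z$ of dimension $2n+1$ carrying the twistor fibration $\tau\colon Z\to M$ (with fibre $\mathbb{CP}^1$) and a real structure, obtained by gluing two ``ends'' built from the two transverse foliations $\mathcal F_+,\mathcal F_-$ of the complexification $S^{\mathbb C}$ of $(S,J)$, suitably twisted by the c-projective density bundle and by $(\mathfrak L,\nabla)$. The $S^1$-action on $M$ that is part of the construction extends to a holomorphic $\mathbb{C}^*$-action on $Z$; its fixed locus contains a distinguished copy $S_0\subset Z$ of $S$ --- the locus of those complex structures $J_s$ on $T_sM$, $s\in S$, that restrict to $J$ on $T_sS$ --- which meets every twistor line through $S$ in a single point. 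I would first record the standard twistorial dictionary: a section $\sigma$ of $\tau$ over an open $U\subset M$ is the same as an almost complex structure $J_\sigma$ on $U$ compatible with $Q$; $J_\sigma$ is integrable exactly when $\sigma(U)$ is a complex hypersurface of $Z$, in which case (by \cite{AMP}) it is preserved by a quaternionic connection and $(U,J_\sigma)$ is complex quaternionic; $J_\sigma$ is $S^1$-invariant if and only if $\sigma(U)$ is $\mathbb C^*$-invariant; and $J_\sigma|_S=J$ if and only if $S_0\subset\sigma(M)$. Hence the theorem reduces to the following: for each real-analytic connection $D$ in the c-projective class with curvature of type $(1,1)$ --- these form an affine space $\mathcal D$, which the standing hypothesis makes nonempty, any two members differing by a closed $(1,0)$-form --- construct a $\mathbb C^*$-invariant complex hypersurface $\Sigma_D\subset Z$ containing $S_0$ on which $\tau$ restricts to a diffeomorphism onto a neighbourhood of $S$, and check that $D\mapsto\Sigma_D$ is injective.

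To build $\Sigma_D$ I would use the following reformulation of the hypothesis on $D$: after complexification, a connection in the c-projective class has curvature of type $(1,1)$ if and only if the induced connection $D^{\mathbb C}$ on $S^{\mathbb C}$ (on the density-twisted tangent bundle entering the construction) is flat along the leaves of both foliations $\mathcal F_+$ and $\mathcal F_-$. Leafwise parallel transport for $D^{\mathbb C}$ then provides canonical identifications of the twisting data between infinitesimally near leaves, which is exactly the datum the two ends of $Z$ are assembled from. Feeding $D$ into the gluing produces, in each end, the graph of a holomorphic section vanishing along $S_0$, and the Feix--Kaledin identification matches the two graphs into a single submanifold $\Sigma_D\subset Z$ through $S_0$. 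The section is homogeneous of the correct weight for the fibre $\mathbb C^*$-action (the splitting carried by $D$ respects the grading), so $\Sigma_D$ is $\mathbb C^*$-invariant; a transversality check along $S_0$, where only the linearised data matter, shows $\tau|_{\Sigma_D}$ is a diffeomorphism near $S$. Since two members of $\mathcal D$ differ by a (holomorphic, after complexification) closed $(1,0)$-form, the corresponding gluing sections differ, so $D\mapsto\Sigma_D$ is injective; and for the distinguished connection adapted to one end one recovers $\Sigma_D$ equal to that end's natural $\mathbb C^*$-invariant hypersurface, the ``canonical'' member of the family.

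What remains is integrability: that $\Sigma_D$ is a \emph{complex} hypersurface rather than merely a real submanifold of the right dimension. Here the type-$(1,1)$ conditions on the curvatures of $D$ \emph{and} of $\nabla$ are genuinely used. Concretely I would compute, along a twistor line through a point near $S$ and along $S_0$, the failure of $T\Sigma_D$ to be $J_Z$-invariant --- equivalently, the Nijenhuis tensor of the almost complex structure $J_D$ read off from $\Sigma_D$ --- and show that once the higher-order fibre terms are eliminated by $S^1$-averaging, the only surviving contribution is a universal expression in the $(0,2)$-parts (equivalently, the non-leafwise-flat parts) of $R^{D^{\mathbb C}}$ and $R^{\nabla}$, both of which vanish by hypothesis; hence $J_D$ is integrable and $(M,J_D)$ is complex quaternionic. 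Finally $J_D|_S=J$ is immediate from $S_0\subset\Sigma_D$ together with the characterisation of $S_0$ above. The main obstacle is this middle step: pinning down exactly how a representative of the c-projective class feeds into the explicit gluing data of \cite{BC}, and then carrying out the bookkeeping-heavy but structurally transparent curvature computation identifying the integrability obstruction with the $(0,2)$-curvatures of $D$ and $\nabla$; the rest is formal once the construction of $Z$ and the twistorial dictionary are in hand.
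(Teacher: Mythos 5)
Your overall frame --- pass to the twistor space $Z$ from \cite{BC} and produce, for each admissible $D$, a $\mathbb{C}^*$-invariant complex hypersurface through the distinguished copy of $S$ --- is exactly the paper's, and your twistorial dictionary (holomorphic hypersurface transversal to twistor lines gives a compatible integrable complex structure by \cite{AMP}; $\mathbb{C}^*$-invariance corresponds to $S^1$-invariance) is the one the paper invokes. The genuine gap is the middle step you yourself flag: you never actually construct $\Sigma_D$, and ``feeding $D$ into the gluing produces the graph of a holomorphic section vanishing along $S_0$'' is not a construction. The paper's mechanism is concrete and linear-algebraic: a representative $D$ with type $(1,1)$ curvature induces a connection on $\mathcal{O}(1)$ with type $(1,1)$ curvature; tensoring with the complexification of $\nabla$ gives flat connections $\nabla^D_w$ on $\mathcal{L}_{1,0}\otimes(\mathcal{L}_{0,1})^*$ and $(\mathcal{L}_{1,0})^*\otimes\mathcal{L}_{0,1}$ along the leaves of the two foliations; the one-dimensional spaces of $\nabla^D_w$-parallel sections are shown (via the jet splitting $\hat l=(l_w,\nabla^D_w l_w)=(l_w,0)$ and the tractor formula) to consist of \emph{affine} sections, hence to define holomorphic line subbundles $L^\pm\subset V^\pm$; and the hypersurfaces are the annihilator subbundles $D^\pm\subset (V^\pm)^*$. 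Because $D^\pm$ are holomorphic vector subbundles, they are automatically complex hypersurfaces, automatically preserved by the fibrewise scalar $\mathbb{C}^*$-action, automatically contain the zero sections, and are exchanged by the real structure; integrability of $J_D$ is then immediate from \cite{AMP}. Your proposed Nijenhuis-tensor computation is therefore not only unexecuted but unnecessary: the type $(1,1)$ hypotheses are spent earlier, in guaranteeing that the leafwise connections are flat, so that parallel (affine) sections exist along entire leaves.

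Two smaller points. For injectivity of $D\mapsto J_D$, your claim that two admissible representatives differ by a closed $(1,0)$-form is not the right statement (c-projective changes are by $\abrack{\cdot,\gamma}_c$ for an arbitrary $1$-form $\gamma$); the paper argues instead that the construction depends only on the induced connection on $\mathcal{O}(1)$ and that distinct representatives induce distinct connections there, citing Remark 2.4 of \cite{CEMN}. And describing the hypersurface as the ``graph of a section vanishing along $S_0$'' is dimensionally confusing: the relevant object is a rank $n$ linear subbundle of the rank $n+1$ bundle $(V^\pm)^*$ over an $n$-dimensional base, i.e. a fibrewise hyperplane in each end, not a graph over the base.
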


As a corollary, we deduce that quaternionic manifolds near maximal fixed point sets of a rotating quaternionic $S^1$-action admit the family of $S^1$-invariant connections with holonomy contained in $GL(n,\mathbb{H})U(1)$ are parametrized by real analytic connections $D$ with curvature of type $(1,1)$ in the underlying c-projective class .

A particular case of our examples of complex structures coincide with the distinguished complex structure on quaternion-K\"ahler manifolds with isometic circle action found by Battaglia \cite{Bat}. Therefore our results provide a characterization of this complex structure in the case when the $S^1$-action admits a fixed point set of maximal dimension.
It is worth noting that Hitchin \cite{Hit3} shows that the holonomy of the underlying connection is in fact  $U^*(2n)$. 
\subsection*{Acknowledgements}
The author has been supported by the Polish National Science Center project number 2022/47/D/ST1/02197. 
\section{Background}
 In this section we give necessary definitions and a very brief description of quaternionic Feix--Kaledin construction \cite{BC}. For more detailed background, see \cite{Alex},  \cite{Bo}, \cite{BC} and \cite{SAL}. 

\begin{defi}
Let $n\geq 2$. A $4n$-dimensional smooth manifold $M$ is called quaternionic if it is equipped  with a subbundle $Q\subset End (TM)$ with $rk(Q)=3$ such that $Q$ is locally generated by three anti-commuting almost complex structures $I,J,K$ satisfying 
$$I^2=J^2=K^2=IJK=-Id,$$
and such that there exists a torsion-free connection $D$, called a quaternionic connection, with $D_XQ\subset Q$ for all $X$. 
\end{defi}

For a fixed quaternionic manifold the class of all quaternionic connections $[D]_q$ is  given by an equivalence relation

\begin{equation}\label{q-structure} D\sim_q D' \Leftrightarrow \exists_{\gamma}:\ \forall Y,Z\in TM \quad D'_YZ=D_YZ +\abrack{Y,\gamma}_qZ,\end{equation}
where $\gamma\in T^*M$ and $$\abrack{Y,\gamma}_q(Z)=\frac{1}{2}(\gamma (Y)Z+\gamma (Z)Y-\Sigma_{i=1}^3(\gamma(I_iY)I_iZ+\gamma(I_iZ)I_iY))$$ where $I_1,I_2,I_3$ is a local frame of almost complex structures (see \cite{Alex}).

A quaternionic manifold admits many $Q$-hermitian metrics. However, usually the Levi-Civita connection of such metric is not quaternionic. In a rare case when it is quaternionic we call the metric \emph{quaternion-K\"ahler} and in fact, it is one of the special holonomy cases from the Berger list \cite{Be}, where the group is $Sp(n)\cdot Sp(1) $. 

Another special case of quaternionic manifold is when the bundle $Q$ is trivial and generated by three complex structures $I,J, K$ satisfying quaternionic relation. In this case the manifold is called \emph{hypercomplex} and it admits unique quaternionic connection with $DI=DJ=DK=0$ called the \emph{Obata connection}. In the case when $D$ is the Levi-Civita connection of a metric which is K\"ahler with respect to all $(I,J,K)$ we call the manifold \emph{hyperk\"ahler}.

The main object of this paper is \emph{complex quaternionic structure}, i.e., a quaternionic manifold with a fixed compatible integrable complex structure. Now, we will recall some facts about them due to D. Alekseevsky, S. Marchiafava and M. Pontecorvo. 
\begin{prop}[Alekseevsky et al. \cite{AMP}]
 Let $(M,Q)$ be a quaternionic manifold of dimension $4n>4$. Then locally on $M$ there exists an integrable complex structure.   
\end{prop}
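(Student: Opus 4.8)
The plan is to realise $Q$-compatible almost complex structures on open subsets of $M$ as sections of the twistor fibration and to produce one whose image is a complex submanifold of the twistor space. Recall that the \emph{twistor space} of $(M,Q)$ is the sphere bundle
$$Z=\bigl\{\,j\in Q_x:\ j^2=-\mathrm{id},\ x\in M\,\bigr\}\ \xrightarrow{\ \pi\ }\ M,$$
with fibre $S^2\cong\mathbb{CP}^1$; a quaternionic connection $D$ induces a connection on $Q$, hence a horizontal distribution, and thus a splitting $T_zZ=H_z\oplus V_z$ at $z=j\in\pi^{-1}(x)$ with $d\pi|_{H_z}\colon H_z\xrightarrow{\ \sim\ }T_xM$ and $V_z=T_z\bigl(\pi^{-1}(x)\bigr)$. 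The twistor almost complex structure $\mathbb J$ is block diagonal for this splitting: $\mathbb J|_{H_z}=j$ (via the isomorphism above) and $\mathbb J|_{V_z}$ is the complex structure of the fibre $\mathbb{CP}^1$. By construction, a $Q$-compatible almost complex structure $J$ on an open set $U\subset M$ is exactly a smooth section $s\colon U\to Z$ of $\pi$, via $J_y:=s(y)$.

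Fix $z_0\in Z$, lying over $x_0\in M$. Since $\dim M=4n>4$, a theorem of Salamon (see \cite{SAL}) ensures that $\mathbb J$ is integrable, so $(Z,\mathbb J)$ is a complex manifold of complex dimension $2n+1$ and each fibre is a smooth rational curve. The fibre $\pi^{-1}(x_0)$ through $z_0$ being a complex submanifold, there are local holomorphic coordinates $(w_0,w_1,\dots,w_{2n})$ centred at $z_0$ with $\pi^{-1}(x_0)=\{w_1=\dots=w_{2n}=0\}$ near $z_0$. Set $Y:=\{w_0=0\}$, a complex hypersurface with $T_{z_0}Y$ complementary to $V_{z_0}$; then $d\pi|_{T_{z_0}Y}\colon T_{z_0}Y\to T_{x_0}M$ is an isomorphism, so by openness of transversality we may shrink $Y$ so that $\pi|_Y\colon Y\to U:=\pi(Y)$ is a diffeomorphism onto a neighbourhood of $x_0$. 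Let $s:=(\pi|_Y)^{-1}$ be the corresponding section of $\pi$ over $U$, and let $J$ be the associated $Q$-compatible almost complex structure.

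Finally I would check that $J$ is integrable by showing that $s\colon(U,J)\to(Y,\mathbb J|_Y)$ is pseudoholomorphic, whence $J=s^{*}(\mathbb J|_Y)$ is the pullback of an integrable structure under a diffeomorphism, hence integrable. Since $s$ is a section, $d\pi$ restricts to an isomorphism on $ds(T_yU)$, so $ds(T_yU)\cap V_{s(y)}=0$. For $X\in T_yU$ the horizontal component of $ds(X)$ equals $X$ (under $d\pi$), and $\mathbb J$ preserves $H\oplus V$ and acts on $H_{s(y)}$ as $J_y$; hence $\mathbb J\,ds(X)-ds(J_yX)$ is vertical. But $Y=s(U)$ is a complex submanifold of $(Z,\mathbb J)$, so $\mathbb J\,ds(X)\in ds(T_yU)$ and therefore $\mathbb J\,ds(X)-ds(J_yX)\in ds(T_yU)$; combined with the previous observation this forces $\mathbb J\,ds(X)=ds(J_yX)$. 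The crux of the argument is Salamon's integrability theorem for $\mathbb J$, which is exactly where the hypothesis $n\ge 2$ enters; everything else is elementary once $Z$ is known to be complex. If one wanted a self-contained proof, the sole nontrivial point would be the vanishing of the Nijenhuis tensor of $\mathbb J$, obtained by expanding it in terms of the curvature and vanishing torsion of a quaternionic connection.
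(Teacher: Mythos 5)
Your proof is correct and follows essentially the same route as the source the paper cites for this statement: compatible complex structures correspond to holomorphic sections of the twistor fibration (equivalently, to complex hypersurfaces of $Z$ transverse to the fibres), and such a hypersurface exists locally once Salamon's integrability of the twistor almost complex structure for $n\ge 2$ is granted. The paper itself offers no proof beyond the citation of \cite{AMP}, but your mechanism is exactly the one the author invokes later (in the proof of Proposition \ref{main}) to pass from hypersurfaces in $Z$ to compatible integrable complex structures on $M$.
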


\begin{thm}[Alekseevsky at al. \cite{AMP}, Proposition 3.6 and Theorem 4.1] Let $I$ be an integrable complex structure on  some quaternionic manifold $(M,Q)$. Then, there exists a unique quaternionic connection $D$ preserving $I$, i.e., satisfying $DI=0$.
    
\end{thm}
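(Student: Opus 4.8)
The plan is to work fiberwise inside the affine space $[D]_q$ using the explicit model \eqref{q-structure}. First I would fix a local frame $I_1=I,I_2,I_3$ of $Q$ and a background quaternionic connection $D_0$, so that every quaternionic connection is $D=D_0+\abrack{\cdot,\gamma}_q$ for a unique $1$-form $\gamma$; the theorem then becomes the statement that $DI=0$ has a unique solution $\gamma$. Since $D_0$ is quaternionic and $\mathrm{Id}$ is parallel, $(D_0)_YI$ anticommutes with $I$ and is a section of $Q$, hence
\begin{equation}\label{DoI}
(D_0)_YI=p(Y)I_2+q(Y)I_3
\end{equation}
for local $1$-forms $p,q$. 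I would then compute $(D_YI)Z-((D_0)_YI)Z=\abrack{Y,\gamma}_q(IZ)-I\,\abrack{Y,\gamma}_q(Z)$ directly from \eqref{q-structure}; the quaternion relations make almost all terms cancel, leaving
\begin{equation}\label{Phi}
(D_YI)Z=\bigl(p(Y)-\gamma(I_3Y)\bigr)I_2Z+\bigl(q(Y)+\gamma(I_2Y)\bigr)I_3Z.
\end{equation}
Thus $DI=0$ is equivalent to the two scalar equations $\gamma\circ I_3=p$ and $\gamma\circ I_2=-q$.

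Uniqueness is then immediate from \eqref{Phi}: the homogeneous system forces $\gamma=0$ since $I_2$ is invertible, so at most one $\gamma$ can satisfy $DI=0$. The real content is existence, i.e.\ the compatibility of the two equations $\gamma=-p\circ I_3=q\circ I_2$, which reads
\begin{equation}\label{compat}
q\circ I_2=-p\circ I_3.
\end{equation}
My claim is that \eqref{compat} is exactly the integrability condition in disguise.

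To see this I would substitute \eqref{DoI} into the torsion-free expression for the Nijenhuis tensor, $N(X,Y)=(D_{IX}I)Y-(D_{IY}I)X-I(D_XI)Y+I(D_YI)X$, and collect the $I_2,I_3$ terms to obtain
\begin{equation}\label{Nij}
N(X,Y)=\alpha(X)I_2Y+\beta(X)I_3Y-\alpha(Y)I_2X-\beta(Y)I_3X,
\end{equation}
with $\alpha:=p\circ I+q$ and $\beta:=q\circ I-p$. The crucial step is to extract $\alpha=\beta=0$ from $N\equiv 0$: fixing $X$ in \eqref{Nij}, the endomorphism $Y\mapsto(\alpha(X)I_2+\beta(X)I_3)Y$ must equal $Y\mapsto\alpha(Y)I_2X+\beta(Y)I_3X$, which has rank at most $2$; but a nonzero combination $aI_2+bI_3$ satisfies $(aI_2+bI_3)^2=-(a^2+b^2)\mathrm{Id}$ and is therefore invertible of rank $4n$. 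Since $4n>2$, this forces $\alpha(X)I_2+\beta(X)I_3=0$ for all $X$, hence $\alpha=\beta=0$, i.e.\ $q=-p\circ I$ and $p=q\circ I$. Then $q\circ I_2=-(p\circ I)\circ I_2=-p\circ(I_1I_2)=-p\circ I_3$, which is precisely \eqref{compat}. This produces the unique local $\gamma$; because $I$ and the conditions ``$D$ quaternionic, $DI=0$'' are frame-independent, uniqueness forces the local connections to agree on overlaps and glue to a global quaternionic $D$ with $DI=0$.

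The main obstacle is precisely the implication \eqref{Nij}$\Rightarrow\alpha=\beta=0$: everything else is linear bookkeeping inside the model \eqref{q-structure}, whereas this is the step where integrability is genuinely consumed and where the standing hypothesis $\dim M=4n>4$ is essential, entering through the rank bound $4n>2$ that defeats the rank-$2$ right-hand side.
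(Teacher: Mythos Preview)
The paper does not contain its own proof of this statement: it is quoted in the Background section as a result of Alekseevsky--Marchiafava--Pontecorvo \cite{AMP} (their Proposition~3.6 and Theorem~4.1) and is used as a black box thereafter. So there is nothing in the present paper to compare your argument against.

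That said, your proof is correct and is essentially the standard direct argument one would extract from \cite{AMP}. Your computation of $\abrack{Y,\gamma}_q(IZ)-I\,\abrack{Y,\gamma}_q(Z)$ is right, the reduction of $DI=0$ to $\gamma\circ I_3=p$, $\gamma\circ I_2=-q$ is clean, and the Nijenhuis identity really does collapse to $\alpha=p\circ I+q=0$, $\beta=q\circ I-p=0$, from which the compatibility $q\circ I_2=-p\circ I_3$ follows. One small overclaim: you say the hypothesis $\dim M=4n>4$ is ``essential'' for the rank argument, but in fact your inequality only needs $4n>2$, which already holds for $n=1$; the restriction $n\geq 2$ in the definition of a quaternionic manifold is imposed for other reasons (e.g.\ so that the quaternionic class is well defined as a parabolic geometry), not because your rank step would fail at $n=1$. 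This does not affect the validity of the proof.
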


Quaternionic connections preserving some integrable complex structure have holonomy contained in $GL(n,\mathbb{H})U(1)$.
\begin{rem}

Note that there is some ambiguity in terminology as Joyce \cite{Joyce} and Hitchin \cite{Hit3} by  quaternionic complex connections mean quaternionic manifolds which preserves both an integrable complex structure  and some volume form. We call such connection special complex quaternionic connections (see \cite{BC}) (note that in \cite{T} they are called scaled quaternionic). Special complex quaternionic connections have holonomy inside group $SL(n,\mathbb{H})U(1)$ which in \cite{Hit3} is also denoted by $U^*(2n)$ to emphasize its analogy with $U(2n)$ as a real form of $GL(2n,\mathbb{C})$.
\end{rem}

For any quaternionic manifold $(M,Q)$  over each point of $M$ we have a sphere (i.e., $\mathbb{CP}^1$) of anti-commuting complex structures, i.e. those elements of $Q$ that square to $-Id$. The \emph{twistor space} $Z$ of $M$ is defined to be the total space of this sphere bundle. Salamon \cite{SAL} showed that $Z$ admits an integrable complex structure and that the family of projective lines given by fibres of the sphere bundle over $M$ have normal bundle isomorphic to $\mathcal{O}(1)\otimes\mathbb{C}^{2n}$.

Moreover, the following theorem is true:
\begin{thm}[Pedersen--Poon, \cite{PP}] \label{TwSal}
Let $Z$ be a complex manifold of dimension $2n+1$ such that:
\begin{itemize}
\item[(i)]There is a family of holomorphic projective lines $\mathbb{C}\mathbb{P}^1$ each with normal bundle isomorphic to $\mathbb{C}^{2n}\otimes\mathcal{O}(1)$,
\item[(ii)]Z has a real structure which on lines from the family which are invariant under this real structure induces the antipodal map of $\mathbb{C}\mathbb{P}^1$.
\end{itemize}
Then the parameter space of projective lines invariant under the real structure, called real twistor lines, is a $4n$-manifold with natural quaternionic structure for which $Z$ is the twistor space.
\end{thm}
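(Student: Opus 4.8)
The plan is to realize $M$ as a Kodaira moduli space of the given rational curves, impose the reality condition to cut down to the real points, build the almost quaternionic structure algebraically from the splitting of the normal bundle, and finally deduce integrability (the existence of a torsion-free $Q$-connection) from the integrability of the complex structure on $Z$. First I would apply Kodaira's theorem on deformations of compact complex submanifolds to a line $\ell\cong\mathbb{C}\mathbb{P}^1$ of the family. Since $N_\ell\cong\mathbb{C}^{2n}\otimes\mathcal{O}(1)$, I compute $H^0(\ell,N_\ell)\cong\mathbb{C}^{2n}\otimes H^0(\mathcal{O}(1))\cong\mathbb{C}^{4n}$ and $H^1(\ell,N_\ell)\cong\mathbb{C}^{2n}\otimes H^1(\mathcal{O}(1))=0$. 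The vanishing of $H^1$ makes the deformations unobstructed, so Kodaira's theorem gives a smooth complex parameter space $\tilde M$ of complex dimension $4n$ together with the canonical isomorphism $T_m\tilde M\cong H^0(\ell_m,N_{\ell_m})$ at each $m\in\tilde M$.

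Next I would use hypothesis (ii). The real structure $\tau$ on $Z$ sends curves of the family to curves and acts antiholomorphically, so it induces an antiholomorphic involution $\hat\tau$ on $\tilde M$; the real twistor lines are exactly its fixed points $M:=\tilde M^{\hat\tau}$. The fixed locus of an antiholomorphic involution on a complex manifold, where nonempty, is a real-analytic submanifold of real dimension equal to the complex dimension of the ambient space, here $4n$. Nonemptiness near a $\tau$-invariant line is built into the hypotheses, and the antipodal action on such lines guarantees the normal-bundle data is compatible with reality. This yields the required smooth real $4n$-manifold $M$.

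I would then construct the almost quaternionic structure. Complexifying, $T_mM\otimes_{\mathbb R}\mathbb{C}\cong T_m\tilde M\cong E_m\otimes S$, where $E_m\cong\mathbb{C}^{2n}$ and $S:=H^0(\mathbb{C}\mathbb{P}^1,\mathcal{O}(1))\cong\mathbb{C}^2$. The antipodal real structure endows $S$ with a quaternionic structure $j$ with $j^2=-\mathrm{Id}$, and the reality constraint identifies $T_mM$ with the real points of $E_m\otimes S$. The group $Sp(1)$ acting on $S$ commutes with this identification, and its imaginary units act on $T_mM$ as three anticommuting complex structures $I,J,K$ satisfying the quaternionic relations; together they span a rank-$3$ subbundle $Q\subset\mathrm{End}(TM)$. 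Equivalently, each $z\in\ell_m$ determines a complex structure $J_z$ on $T_mM$ via the complex structure of $Z$ on the horizontal part of $T_zZ$, exhibiting $\ell_m$ as the sphere of compatible complex structures. Smooth dependence on $m$ follows from the smoothness of the Kodaira family.

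The hard part is integrability: producing a torsion-free connection $D$ with $D_XQ\subset Q$. Here I would use that the almost quaternionic manifold $(M,Q)$ just constructed has, near the real lines, Salamon's twistor space canonically identified with $Z$ carrying its given almost complex structure, the identification matching the $\mathbb{C}\mathbb{P}^1$-family and its normal bundle $\mathcal{O}(1)\otimes\mathbb{C}^{2n}$. For $n\geq 2$ the integrability of the almost complex structure on the twistor space is equivalent to the vanishing of the intrinsic torsion of the $GL(n,\mathbb{H})Sp(1)$-structure, i.e., to the existence of a torsion-free quaternionic connection; since $Z$ is by hypothesis a genuine complex manifold, this obstruction vanishes and $(M,Q)$ is quaternionic with twistor space $Z$. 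I expect the crux to be the computation identifying the $\bar\partial$-operator of $Z$ along the normal directions of the lines with the torsion tensor of $Q$, so that the Newlander--Nirenberg integrability on $Z$ delivers the torsion-free connection on $M$; by comparison the Kodaira and reality steps are routine.
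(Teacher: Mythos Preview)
The paper does not prove this statement: Theorem~\ref{TwSal} is quoted from Pedersen--Poon \cite{PP} as a background result and is used without proof, so there is no ``paper's own proof'' to compare against. Your outline is essentially the standard argument from the twistor literature (Kodaira deformation theory for the moduli of rational curves, reality to cut out the real slice, the $E\otimes H$ splitting to obtain the almost quaternionic structure, and Salamon's equivalence between integrability of the twistor almost complex structure and vanishing of the intrinsic torsion for the integrability step), and as such it is correct in spirit. The one place that would need more care in a full write-up is the identification of Salamon's twistor space of the constructed $(M,Q)$ with the given $Z$ \emph{before} you have a torsion-free connection: strictly speaking the twistor almost complex structure is defined using a choice of $Q$-connection, so one must check that the sphere bundle of $Q$ with this almost complex structure is biholomorphic to a neighbourhood of the real lines in $Z$, at which point integrability of $Z$ forces the Nijenhuis-type obstruction to vanish. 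This is handled in \cite{PP} and \cite{SAL}, but it is the genuine content of the inverse construction rather than a formality.
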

Many additional structures and properties of quaternionic structures can be encoded in the underlying twistor spaces. In particular, the existence of quaternion-K\"ahler metric corresponds to the existence of a holomorphic contact distribution on $Z$ and the existence of a quaternionic $S^1$-action on $M$ corresponds to a local holomorphic $\mathbb{C}^*$-action on $Z$. Both the distribution and the action need to be compatible with the real structure and  transversal to real twistor lines.
 \begin{defi}
 Suppose that $x\in M$ is a fixed point set of a quaternionic $S^1$-action. Consider the $\mathbb{C}^*$-action on the twistor line corresponding to $x$ in $Z$.
 \begin{itemize}
     \item If the action on the twistor line is trivial then we call $x$ a tri-holomorphic point,
     \item If the action rotates the twistor line then we call $x$ a rotating point.
 \end{itemize} 
 \end{defi}
 Note that any rotation of $\mathbb{CP}^1$ fixes exactly two antipodal points.

Recall that a \emph{projective structure} on a smooth manifold is an equivalence class of connections that have the same geodesics. It can be also considered in a holomorphic setting, where we consider only holomorphic connections (i.e. connections on complex manifolds with holomorphic Chistoffel symbols). If such a holomorphic projective structure on a complex manifold $S$ is flat (which for complex dimension $n>1$ means that the invariant part of the curvature - called projective Weyl curvature - vanishes) then there exists unique embedding of $S$ into the model $\mathbb{CP}^n$ such that the projective structures are compatible. 
\begin{defi}
A $2n$-manifold $(M,J,[D]_c)$, $n>1$, is called a \emph{c-projective} manifold if $J$ is an integrable complex structure and $[D]_c$ is a c-projective structure, i.e., an equivalence class of complex $(DJ=0)$ torsion-free connections under the equivalence

$$D\sim_c D' \Leftrightarrow \exists_{\gamma}:\ \forall Y,Z\in TM \quad D'_YZ=D_YZ +\abrack{Y,\gamma}_cZ,$$
where $\gamma\in T^*M$ and $\abrack{Y,\gamma}_c(Z)=\frac{1}{2}(\gamma (Y)Z+\gamma (Z)Y-\gamma(JY)JZ-\gamma(JZ)JY)$.

\end{defi}
Note that in older literature, the c-projective structure was called h-projective structure.

 Projective, c-projective and quaternionic structures are examples of projective parabolic Cartan geometries. In such geometries an important role is played by density bundles. In case of  projective structures in holomorphic setting, this is the holomorphic line bundle $\mathcal{O}(1)$ over a projective manifold $S$, where \begin{equation}
     \mathcal{O}(1)^{\otimes (n+1)}=\bigwedge^{n}TS
 \end{equation}\label{c}

 Note that in general the bundle $\mathcal{O}(1)$ is defined only locally on $S$, as roots of bundles may exist only locally. However, when $S$ is equal to $\mathbb{CP}^n$ with the standard projective structure, the bundle $\mathcal{O}(1)$ is defined globally and is the dual of the tautological line bundle.
 
\begin{equation}\label{tra}
    \mathcal{D}^D_Y\left(\begin{array}{c}l\\\alpha\end{array}\right)=\left(\begin{array}{c}D_Yl-\alpha(Y)\\ D_Y\alpha+(r_p^D)_Yl\end{array}\right),
\end{equation}
where  $r_p^D$ is some normalization of the Ricci tensor.
One can show that this connection when considered on $J^1\mathcal{O}(1)$ does not depend on the choice of the connection in the class and that $\mathcal{D} $ is flat iff the projective structure is flat. 
\subsection{Quaternionic Feix--Kaledin construction} Now we will give a very brief description of quaternionic Feix--Kaledin construction \cite{BC}. As the construction is quite complicated, we limit only to the facts that are needed to construct examples in Section \ref{fami}. 

The idea behind quaternionic Feix--Kaledin is to fully characterize quaternionic structures on a neighbourhood of a component $S$ of the fixed point set of a quaternionic $S^1$-action provided that the action is rotating and $S$ is of maximal dimension (which is half of the dimension of the quaternionic manifold) in terms of a structure on $S$. It turns out that such $S$ is a complex submanifold and is equipped with a c-projective structure with c-projective Weyl curvature (i.e. invariant part of the curvature) of type $(1,1)$ together with a holomorphic line bundle with a holomorphic connection with curvature of type $(1,1)$. Moreover, these data fully characterize (locally near $S$) the underlying quaternionic manifold and therefore can be used to reconstruct it. 

\begin{thm}{\cite{BC}}
    Let $(S,J,[D]_c)$ be a real analytic c-projective manifold with c-projective curvature of type $(1,1)$. Let $(\mathfrak{L},\nabla)$ be a holomorphic line bundle with a holomorphic connection with real-analytic type $(1,1)$ curvature. Then the quaternionic Feix--Kaledin construction from \cite{BC} gives a twistor space $Z$ of a quaternionic manifold $M$ equipped with a quaternionic $S^1$-action having $S$ as a component of its fixed points.  Furthermore, $S$ is a totally complex submanifold of $M$, with induced c-projective structure $[D]_c$, and a neighbourhood of $S$ in $M$ is
$S^1$-equivariantly diffeomorphic to a neighbourhood of the zero section in
$TS\otimes(\mathcal{L}_{1,0}\otimes(\mathcal{L}_{0,1})^*)|_S$. Moreover, any quaternionic
$4n$-manifold with a quaternionic $S^1$-action whose fixed point set has a
connected component $S$ which is a submanifold of real dimension $2n$ with no
triholomorphic points arises from the induced c-projective structure on $S$ via the
quaternionic Feix--Kaledin construction, for some holomorphic line bundle $\mathfrak{L}$ on
$S$.
\end{thm}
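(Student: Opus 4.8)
The plan is to build the twistor space $Z$ of the putative quaternionic manifold $M$ directly out of the complexified c-projective and line-bundle data, and then to invoke the Pedersen--Poon theorem (Theorem \ref{TwSal}) to produce $M$ together with its quaternionic structure. Since all data are real analytic, the first step is to complexify: I would realize a neighbourhood of $S$ inside $S\times\bar S$ (with $\bar S$ the complex-conjugate manifold), so that $S$ embeds as the diagonal and the two projections endow the complexification with a pair of transverse holomorphic foliations whose tangent distributions restrict along $S$ to $T^{1,0}S$ and $T^{0,1}S$. The complexified complex structure $J$, the connection class $[D]_c$, and the pair $(\mathfrak L,\nabla)$ all extend holomorphically to this neighbourhood.

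The key analytic input is the type $(1,1)$ assumption on the c-projective Weyl curvature. I would use it to show that the holomorphic projective structure induced by (the complexification of) $[D]_c$ along the leaves of one foliation is flat, so that each leaf embeds into a model $\mathbb{CP}^n$ compatibly. Equivalently, the c-projective tractor connection---the analogue for $[D]_c$ of the density-jet connection \eqref{tra}---becomes flat when restricted to these leaves. Parallel transport of its fibres then produces, over the leaf space of one foliation, a $\mathbb{CP}^1$-family of rational curves; the line bundle $\mathfrak L$ is used precisely to twist this family so that the normal bundle of each curve is exactly $\mathbb{C}^{2n}\otimes\mathcal O(1)$, and to supply the scaling that becomes the $\mathbb{C}^*$-action on $Z$. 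The conjugation interchanging $S$ and $\bar S$ (equivalently, swapping the two foliations) defines an antiholomorphic involution of $Z$ inducing the antipodal map on the invariant lines. With $Z$ of dimension $2n+1$, the correct normal bundle, and this real structure in hand, Theorem \ref{TwSal} yields a quaternionic $4n$-manifold $M$ with twistor space $Z$; the $\mathbb{C}^*$-action descends to a quaternionic $S^1$-action, and because it rotates the relevant twistor lines its fixed locus is a rotating, non-triholomorphic component.

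Next I would identify this fixed component with $S$ and verify the stated structure. The diagonal in $S\times\bar S$ furnishes the real twistor lines whose parameters form $S\subset M$; the splitting of $TM|_S$ into the $\pm1$ eigenspaces of the rotation exhibits $S$ as totally complex and identifies its normal bundle with $TS$ twisted by the $(1,0)$ and $(0,1)$ pieces of $\mathfrak L$, giving the $S^1$-equivariant diffeomorphism onto a neighbourhood of the zero section in $TS\otimes(\mathcal L_{1,0}\otimes(\mathcal L_{0,1})^*)|_S$, with $S^1$ acting by rotation of these fibres. That the c-projective structure induced on $S$ from the quaternionic class $[D]_q$---via the general restriction recipe for totally complex submanifolds, relating $\abrack{\cdot,\cdot}_q$ to $\abrack{\cdot,\cdot}_c$---coincides with the given $[D]_c$ follows by unwinding the construction, since the tractor data used to build the leafwise embeddings is exactly what the restriction recovers.

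The converse, and the hardest part, is the rigidity statement. Starting from an abstract quaternionic $M$ with a rotating $S^1$-action and a maximal fixed component $S$ with no triholomorphic points, I would first extract the induced complex structure and c-projective class on $S$, and read off the holomorphic line bundle $\mathfrak L$ from the $S^1$-weights on the normal bundle. The main obstacle is to show that these data determine $M$ near $S$, i.e. that running the forward construction returns the given manifold. This is a normal-form problem: one compares the formal neighbourhood of the twistor line over a point of $S$ in the abstract $Z$ with the one produced by the construction, order by order in the normal directions, and uses real-analyticity to pass from formal agreement to an honest biholomorphism of a neighbourhood. The $\mathbb{C}^*$-action---guaranteed by the absence of triholomorphic points---is what rigidifies these normal expansions, so that the comparison closes up.
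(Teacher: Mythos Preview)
This theorem is not proved in the present paper: it is quoted from \cite{BC}, and what the paper supplies is only an outline of the construction (Sections \ref{p}--\ref{bun} and the gluing/$\mathbb{C}^*$-action paragraphs). So the relevant comparison is between your sketch and that outline.

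Your forward direction follows the same architecture as the paper's summary: complexify $S$ as the diagonal in $S\times\overline{S}$, use the type-$(1,1)$ hypotheses to obtain leafwise-flat projective structures and leafwise-flat connections on the twisted line bundles, pass to (twisted) tractor connections, assemble $Z$, and apply Theorem~\ref{TwSal}. Two points deserve correction. First, your description of the output of the tractor step (``a $\mathbb{CP}^1$-family of rational curves over the leaf space'') is not how $Z$ is built: the construction produces rank-$(n+1)$ vector bundles $V^{\pm}$ of \emph{affine sections} (leafwise-parallel $1$-jets) over the two leaf spaces, and $Z$ is obtained by gluing open subsets of $(V^+)^*$ and $(V^-)^*$ via the blow-down maps $\phi_{\pm}$ from $\mathcal{L}_{1,0}\otimes(\mathcal{L}_{0,1})^*$ and its dual. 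The twistor lines come from the fibres of $\mathcal{P}=\mathbb{P}(\mathcal{L}_{1,0}\oplus\mathcal{L}_{0,1})$, not from the tractor bundle directly. Second, your account of the role of $\mathfrak{L}$ is off: the normal bundle $\mathbb{C}^{2n}\otimes\mathcal{O}(1)$ is automatic from the dimensions; $\mathfrak{L}$ instead enters as an additional twist of the tractor data and parametrizes the different quaternionic structures sharing the same c-projective class on $S$.

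For the converse you propose an order-by-order formal-neighbourhood comparison. That is not the approach: one argues directly on the twistor side, using the holomorphic $\mathbb{C}^*$-action (which exists precisely because there are no triholomorphic points) to exhibit $Z$ near the lines over $S$ as a gluing of open subsets of two dual vector bundles over the $0$- and $\infty$-divisors, and then identifies those bundles with the $V^{\pm}$ built from the induced c-projective data together with a line bundle $\mathfrak{L}$ read off from the normal data along $S$. No formal power-series matching is needed; the $\mathbb{C}^*$-action linearizes the picture in one step.
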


The bundles $\mathcal{L}_{1,0}$ and $\mathcal{L}_{0,1}$ are some line bundles depending on $\mathfrak{L}$ which will be defined in Section \ref{p}.

In particular, the construction gives a precise description of the twistor space of quaternionic manifolds with $S^1$-action of the required type in a neighborhood of the twistor lines corresponding to the fixed point set $S$. To be able to construct the example, we need to understand the properties of these twistor spaces.

\subsubsection{Overview}
Let $Z$ be a twistor space obtained by the quaternionic Feix--Kaledin construction, i.e. it is a twistor space of a quaternionic manifold $M$ with a rotating $S^1$-action which preserves the quaternionic structure and has the fixed point set $S\subset M$  which is a complex manifold of complex dimension $n$. Then there exist two holomorphic vector bundles $V^+$ and $V^-$ of rank $n+1$ over $n$-dimensional complex manifolds and two open subsets $U^+\subset (V^+)^*$ and $U^-\subset (V^-)^*$ such that $Z$ is obtained as a gluing of $U^+$ and $U^-$. Moreover, the $\mathbb{C}^*$-action is induced on $Z$ from the scalar multiplication in the fibres of $(V^+)^*$ and the inverse of scalar multiplication in the fibres of $ (V^-)^*$. The general idea is, that the gluing is given by two blow-down maps to $U^+$ and $U^-$ from some bundle $\mathcal{P}$ of projective lines over neighborhood of the diagonal of the complex manifold $S\times \overline{S}$. The two zero sections of $U^+$ and $U^-$ correspond to the submanifold $S$ with two distinguished complex structures $J$ and $-J$. Below we discuss necessary technical details. 

\subsubsection{The bundle $\mathcal{P}$}\label{p} Observe that $S\times \overline{S}$ is a model of a complexification of the complex manifold $S$ and the diagonal is the real submanifold isomorphic with $S$ (for details concerning complexification see \cite{BC}). The projections on the first and second factor are holomorphic and define  on $S\times \overline{S}$ two transverse holomorphic foliations by manifolds isomorphic with $S$ and $\overline{S}$ respectively. 
\begin{defi}
    The foliation given by equation $S\times{b}$ for $b\in\overline{S}$ will be denoted the $(1,0)$-foliation and the foliation given by equation $a\times S$ for $a\in S$ will be denoted the $(0,1)$-foliation. Note that the leaf space of the $(1,0)$-foliation is isomorphic with $\overline{S}$ and the leaf space of the $(0,1)$-foliation is isomorphic with $S$.
\end{defi}\label{fol}
For the holomorphic bundles $\mathfrak{L}$ and $\mathcal{L}:=\mathfrak{L}\otimes \mathcal{O}(1)$ over $S$ (see \ref{c}) we consider their pull-backs  by the  projection on the first factor from $S\times\overline{S}$ and denote them by $\mathfrak{L}_{1,0}$ and $\mathcal{L}_{1,0}$ respectively. Note that these bundles are trivial along the leaves of the $(0,1)$-foliation. Similarly,  the bundles $\overline{\mathfrak{L}}$ and  $\overline{\mathcal{L}}$ are holomorphic over $\overline{S}$ (see \ref{c}) and their pull-back  by the  projection on the second factor are trivial along the leaves of $(1,0)$-foliation and will be denoted by $\mathfrak{L}_{0,1}$ and $\mathcal{L}_{0,1}$. The rank $2$ bundle $\mathcal{L}_{1,0}\oplus\mathcal{L}_{0,1}$ over $S\times\overline{S}$ is a complexification of the bundle $\mathcal{L}:=\mathfrak{L}\otimes\mathcal{O}(1)$ over $S$. We define the projective bundle over $S\times\overline{S}$ by $$\mathcal{P}:=\mathbb{P}(\mathcal{L}_{1,0}\oplus\mathcal{L}_{0,1})\ \ \ 
\texttt{and} \ \ \ \mathbb{P}(\mathcal{L}_{1,0}\oplus\mathcal{L}_{0,1})=\mathbb{P}([\mathcal{L}_{1,0}\otimes(\mathcal{L}_{0,1})^*]\oplus\mathcal{O})=\mathbb{P}(\mathcal{O}\oplus[(\mathcal{L}_{1,0})^*\otimes\mathcal{L}_{0,1}]).$$
These equalities show that the projective bundle admits two distinguished disjoint sections and can be viewed as a gluing of two vector bundles $\mathcal{L}_{1,0}\otimes(\mathcal{L}_{0,1})^*$ and $(\mathcal{L}_{1,0})^*\otimes\mathcal{L}_{0,1} $ by $l\mapsto l^*$ for which the two sections become zero sections and sections in the infinity.

\subsubsection{Flat structures along the leaves}\label{flat} On the other hand, one can show that the complexification of c-projective structure on a neighbourhood of the diagonal in $S\times\overline{S}$ gives flat holomorphic projective structures along each leaf of the two foliations defined in \ref{fol}. Moreover,  $\mathfrak{L}_{1,0}\oplus\mathfrak{L}_{0,1}$ over $S\times\overline{S}$ is a complexification of the bundle $\mathcal{L}$ over $S$ and one can prove (see \cite{BC}) that a complexification of $\nabla$ gives connection  on $\mathfrak{L}_{1,0}$ which is flat along leaves of the $(1,0)$-foliation and trivial along leaves of the the $(0,1)$-foliation and the analogous fact is true for the bundle $\mathfrak{L}_{0,1}$. Therefore, we have flat connections on $\mathfrak{L}_{1,0}\otimes(\mathfrak{L}_{0,1})^*$ and $(\mathfrak{L}_{1,0})^*\otimes\mathfrak{L}_{0,1} $ along the corresponding leaves.

These structures can be used to construct twisted tractor connections along each leaf, i.e., connections on $1$-jet bundles $J^1( \mathcal{L}_{1,0}\otimes(\mathcal{L}_{0,1})^*)$ and $J^1((\mathcal{L}_{1,0})^*\otimes\mathcal{L}_{0,1})$ along leaves of  the $(1,0)$-foliation and the $(0,1)$-foliation respectively defined as tensor product of tractor connections and the flat connections on $\mathfrak{L}_{1,0}\otimes(\mathfrak{L}_{0,1})^*$ and $(\mathfrak{L}_{1,0})^*\otimes\mathfrak{L}_{0,1} $.
\begin{rem}
    Note that as the projective tractor bundle is defined on $J^1(\mathcal{O}(1))$ and the bundle $ J^1(\mathcal{L}_{1,0}\otimes(\mathcal{L}_{0,1})^*)$ differs from the bundle $J^1(\mathfrak{L}_{1,0}\otimes(\mathfrak{L}_{0,1})^*) $ by a pull-back of $\overline{\mathcal{O}(1)}$ by projection on the second factor. However, this pull-back bundle is canonically trivial along the leaves of the $(1,0)$-foliation hence does not contribute into the connection. 
\end{rem}

\subsubsection{The bundles  $V^+$ and $V^-$}\label{bun} As over each leaf of the foliations \ref{fol} we have flat holomorphic connection on a vector bundle (of $1$-jets) which is of rank $n+1$, therefore we also have $(n+1)$-dimensional vector spaces of parallel sections of the connections. Using the evaluation of $1$-jets these  induce families of sections of $\mathcal{L}_{1,0}\otimes(\mathcal{L}_{0,1})^*$ and $(\mathcal{L}_{1,0})^*\otimes\mathcal{L}_{0,1} $  called \emph{affine} sections. We define two vector bundles over leaf spaces of the foliations by requirement that fibrewise they are the corresponding vector spaces of affine sections:
\begin{defi}\label{d:affine} 
The bundle $V^-\to S$ of \emph{affine
sections} along the fibres of the $(0,1)$-foliation \ref{fol}
is the bundle whose fibre at $x\in S$ is the space of sections $l$ of $(\mathcal{L}_{1,0})^*\otimes\mathcal{L}_{0,1}$ along the leaf such that there exists  a section $\hat{l}$ of $J^1[(\mathcal{L}_{1,0})^*\otimes\mathcal{L}_{0,1}]$ parallel with the respect to the twisted tractor connection on the leaf such that $ev(\hat{l})=l$, where $ev$ is the canonical projection from $J^1[(\mathcal{L}_{1,0})^*\otimes\mathcal{L}_{0,1}]$ to $(\mathcal{L}_{1,0})^*\otimes\mathcal{L}_{0,1}$.

The
bundle $V^+\to \overline{S}$ is defined similarly. 
\end{defi}
\subsubsection{The gluing map} We define two evaluation maps $\phi_+$ from $(\mathcal{L}_{1,0})^*\otimes\mathcal{L}_{0,1} $  to $(V^+)^*$ and $\phi_-$ from $\mathcal{L}_{1,0}\otimes(\mathcal{L}_{0,1})^*$ to $V^-$  by:
\begin{equation}
\phi_+: \ \ \ (x,\tilde{x},l) \mapsto (\tilde{x}, f\mapsto f(x)\otimes l)
\end{equation}
\begin{equation}
\phi_-: \ \ \ (x,\tilde{x},\tilde{l}) \mapsto (z, \tilde{f}\mapsto \tilde{f}(\tilde{x})\otimes \tilde{l}),
\end{equation}
where $(x,\tilde{x})$ denote points on the base $S\times\overline{S}$.
Note that the maps defined in this way behave like blow-down.

As explained in Section \ref{p}, the gluing of $(V^+)^*$ and $(V^-)^*$ is given by gluing of $\mathcal{L}_{1,0}\otimes(\mathcal{L}_{0,1})^*$ and $(\mathcal{L}_{1,0})^*\otimes\mathcal{L}_{0,1} $ induced from $\mathcal{P}$. 

A manifold obtained by such a gluing may not be Hausdorff, but we can choose open submanifolds $U^+\subset (V^+)^*$ and $U^-\subset (V^-)^*$ containing the zero sections such that the manifold $Z$  obtained by gluing is Hausdorff. In fact $Z$ is a twistor space of quaternionic manifold.
\begin{defi}
    We denote by $Z$
 the complex manifold obtained by gluing $U^+$ and $U^-$ via $\phi_-\circ *\circ(\phi_+)^{-1}$, where the map $*$ denotes dualization $l\mapsto l^*$ between dual bundles. We call $Z$ the twistor space of a quaternionic manifold obtained by quaternionic Feix--Kaledin construction from $(S,J,[D]_c,\mathfrak{L},\nabla)$.
 \end{defi}
\subsubsection{The $\mathbb{C}^*$-action} $Z$ is equipped with a holomorphic local $\mathbb{C}^*$-action given by a scalar multiplication in the fibres of $ (V^+)^*$ and  its inverse in $(V^-)^*$. The $S^1$ subgroup of the action preserves the real twistor lines and therefore induces a quaternionic action on $M$. The $\mathbb{C}^* $-action rotates real twistor lines joining the zero sections of $ (V^+)^*$ and $(V^-)^*$  --- they correspond to the submanifold $S\subset M$ --- therefore, the action is rotating on its fixed points set.

\section{$S^1$-invariant families of complex structures}\label{fami}
Now we are ready to construct the family 
of $\mathbb{C}^*$-invariant complex structures on $M$. These ideas were mentioned briefly in Remark 3 of \cite{Bo} but without giving details as the paper focuses only on a quaternion-K\"ahler case of the construction. 
Suppose that $D\in [D]$ is a real analytic connection in the c-projective class with curvature of type $(1,1)$. Then on $\mathcal{O}(1)$ (see Equation \ref{c}) the connection $D$ induces a connection with curvature of type $(1,1)$. Therefore, using the complexification of the connection $\nabla$ on $\mathfrak{L}$ as discussed in Section \ref{flat} we obtain two families of flat connections $\nabla^D_{w}$ on $\mathcal{L}_{1,0}\otimes(\mathcal{L}_{0,1})^*$ along the $(1,0)$-foliation and on $(\mathcal{L}_{1,0})^*\otimes\mathcal{L}_{0,1}$  along the leaves of the $(0,1)$-foliation respectively (here to shorten the notation, $w$ is a parameter of leaves in both families). The equation 
\begin{equation}\label{par}
\nabla^D_{w}l_w=0 
\end{equation}
define $1$-dimensional vector spaces of parallel sections $cl_w$ of  $\mathcal{L}_{1,0}\otimes(\mathcal{L}_{0,1})^*$ and $(\mathcal{L}_{1,0})^*\otimes\mathcal{L}_{0,1}$ along the leaves of the respective foliations, where $c\in \mathbb{C}$.
\begin{lem}
The families of sections defined by Equation \ref{par} are affine sections along the leaves defined in Section \ref{bun}. Therefore, this defines holomorphic line subbundles $L^+\subset V^+$ and $L^-\subset V^-$ (see Definition \ref{d:affine}).
\end{lem}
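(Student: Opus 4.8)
The plan is to unwind the definitions so that the statement becomes a comparison between two flat connections along the leaves: the twisted tractor connection of Section \ref{flat}, and the auxiliary flat connection $\nabla^D_w$ built from the connection $D$ and the complexification of $\nabla$. Fix a leaf $\ell$ of, say, the $(0,1)$-foliation over a point $x\in S$ (the $(1,0)$ case is identical after conjugation). A section $l_w$ of $(\mathcal{L}_{1,0})^*\otimes\mathcal{L}_{0,1}$ along $\ell$ is an affine section precisely when it lies in the image under $ev$ of a section of $J^1[(\mathcal{L}_{1,0})^*\otimes\mathcal{L}_{0,1}]$ that is parallel for the twisted tractor connection. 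So the first step is: given $l_w$ with $\nabla^D_w l_w=0$, produce its canonical prolongation $\widehat{l_w}\in J^1[(\mathcal{L}_{1,0})^*\otimes\mathcal{L}_{0,1}]$ and check that $\widehat{l_w}$ is tractor-parallel.

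First I would recall the shape of the tractor connection from Equation \ref{tra}: on $J^1\mathcal{O}(1)$ it is $\mathcal{D}^D_Y(l,\alpha)=(D_Yl-\alpha(Y),\, D_Y\alpha+(r_p^D)_Y l)$, and the twisted version on $J^1[(\mathcal{L}_{1,0})^*\otimes\mathcal{L}_{0,1}]$ is the tensor product of this with the flat connection on $(\mathfrak{L}_{1,0})^*\otimes\mathfrak{L}_{0,1}$ along the leaf (the $\overline{\mathcal{O}(1)}$-factor being canonically trivial along $\ell$ by the Remark). Choosing the connection $D$ as the Weyl representative, the natural prolongation of a section $s$ of the density-type line bundle is $\widehat{s}=(s, D s)$ — here $Ds$ means the leaf-direction covariant derivative using $D$ tensored with the flat connection on the $\mathfrak{L}$-part. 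For this prolonged section the top slot of $\mathcal{D}^D\widehat{s}$ is $Ds-Ds=0$ automatically, and the bottom slot is $D(Ds)+(r_p^D)s$, which is the action of (a constant multiple of) the curvature of $D$ on $s$. Since the leaf carries a \emph{flat} holomorphic projective structure (Section \ref{flat}), the relevant symmetrized curvature term vanishes, so $\mathcal{D}^D\widehat{s}=0$: every such $s$ prolongs to a tractor-parallel section along $\ell$. The point of the hypothesis on $D$ — that $D$ has curvature of type $(1,1)$, hence induces a type-$(1,1)$ connection on $\mathcal{O}(1)$ — is exactly what makes the complexified flat connection $\nabla^D_w$ well defined along the $(1,0)$- and $(0,1)$-leaves in the first place, so that Equation \ref{par} has a one-dimensional solution space on each leaf; this should be invoked at the start.

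With the prolongation $s\mapsto(s,Ds)$ in hand, the identification is immediate: the flat connection $\nabla^D_w$ on $(\mathcal{L}_{1,0})^*\otimes\mathcal{L}_{0,1}=(\mathcal{O}(1))^*\text{-type}\otimes(\mathfrak{L}_{1,0})^*\otimes\mathfrak{L}_{0,1}$ along $\ell$ is, by construction, the tensor product of the $D$-induced connection on the density bundle with the flat $\mathfrak{L}$-connection, i.e. it is exactly the connection whose parallel sections $l_w$ satisfy $D l_w=0$ along the leaf; and for such $l_w$ the prolongation is simply $\widehat{l_w}=(l_w,0)$, which we have just seen is tractor-parallel, with $ev(\widehat{l_w})=l_w$. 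Hence every solution $c l_w$ of Equation \ref{par} is an affine section in the sense of Definition \ref{d:affine}, on every leaf of both foliations. Since the one-dimensional spaces $\{c l_w\}$ vary holomorphically with the leaf parameter $w$ (the connections $\nabla^D_w$ depend holomorphically on $w$, and so do their parallel sections), they assemble into holomorphic line subbundles $L^+\subset V^+$ over $\overline S$ and $L^-\subset V^-$ over $S$, as claimed.

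The main obstacle is the bookkeeping in the second step: verifying that the canonical prolongation of a $\nabla^D_w$-parallel section really is annihilated by the \emph{twisted} tractor connection. This requires pinning down the normalization constant relating $(r_p^D)$ to the Ricci/curvature of $D$ along the leaf, checking that the $\overline{\mathcal{O}(1)}$-pullback factor contributes nothing (as asserted in the Remark following Section \ref{flat}), and confirming that ``flat holomorphic projective structure along the leaf'' (Section \ref{flat}) is precisely the vanishing of the obstruction appearing in the bottom slot of $\mathcal{D}^D$. Everything else — the holomorphic dependence on $w$ giving subbundles rather than just fibrewise subspaces — is routine once one notes that the family $\nabla^D_w$ was produced by complexifying a single real-analytic object, so holomorphicity in $w$ is built in.
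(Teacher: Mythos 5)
Your overall strategy is the same as the paper's: split $J^1$ using $\nabla^D_w$, prolong $l_w$ to $\hat l_w=(l_w,\nabla^D_w l_w)=(l_w,0)$, and test parallelism against the explicit formula in Equation~\ref{tra} for the twisted tractor connection. However, the key verification is wrong as you state it. You claim that for an \emph{arbitrary} section $s$ the prolongation $(s,Ds)$ satisfies $\mathcal{D}^D\widehat{s}=0$ because the leaf carries a flat holomorphic projective structure. That cannot be right: it would make every local section along the leaf an affine section, so the fibres of $V^{\pm}$ would be infinite-dimensional instead of $(n+1)$-dimensional. Projective flatness of the leaf means the tractor connection is flat, i.e.\ its parallel sections form an $(n+1)$-dimensional space; it does not make the bottom slot $D_Y\alpha+(r_p^D)_Y l$ vanish identically, and $D(Ds)+r_p^D s$ is not ``the curvature of $D$ acting on $s$'' --- it is a second-order, Hessian-type operator whose kernel is precisely the space of affine sections.

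What your argument actually needs, and what is missing, is the vanishing of $(r_p^D)_Y l_w$ for $Y$ tangent to the leaf: for $\hat l_w=(l_w,0)$ the top slot is $\nabla^D_w l_w=0$ by hypothesis (using, as you note, the compatibility of the twisting flat connection on the $\mathfrak{L}$-factor with $\nabla^D_w$), but the bottom slot is $(r_p^D)_Y l_w$, which is not zero a priori. The correct reason it vanishes is not projective flatness of the leaf but the hypothesis that the \emph{full} curvature of $D$ is of type $(1,1)$: the complexified curvature evaluated on two vectors tangent to a leaf (both of the same type) is zero, so the restriction of the complexified $D$ to each leaf is a flat affine connection, and hence its normalized Ricci $r_p^D$ vanishes along the leaf. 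You invoke the type-$(1,1)$ hypothesis only to make $\nabla^D_w$ flat on the line bundle; it is equally needed for the tangent-bundle connection to kill the bottom slot. With that correction, the remainder of your argument (that $ev(\hat l_w)=l_w$, and that holomorphic dependence on the leaf parameter $w$ assembles the one-dimensional solution spaces into line subbundles $L^{\pm}$) goes through and agrees with the paper's proof.
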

\begin{proof}
  We prove the statement for $L^+$, the proof for $L^-$ is analogous. We need to show that if $l_w$ satisfies Equation \ref{par}  then it extends to a section of the corresponding $1$-jet bundle parallel with respect to the twisted tractor connection along the respective leaf of the $(1,0)$-foliation. Using the connection $\nabla^D_w$ to induce the splitting of  $J^1(\mathcal{L}_{1,0}\otimes(\mathcal{L}_{0,1})^*)\cong [T^*S\otimes\mathcal{L}_{1,0}\otimes(\mathcal{L}_{0,1})^*]\oplus[\mathcal{L}_{1,0}\otimes(\mathcal{L}_{0,1})^*]$ we define the section of the $1$-jet bundle by $$\hat{l}:=(l_w,\nabla^D_wl_w)=(l_w,0).$$
  Using the formula for the projective tractor connection on $\mathcal{O}(1)$ (see Equation \ref{tra}) and the fact that the twisted tractor connection is a tensor product of the tractor connection with the flat connection on a line bundle which is compatible with $\nabla^D_w$ we deduce the claim.
\end{proof}
Define rank $n$ subbundles $D^+$ of $(V^+)^*$ and  $D^-$ of $(V^-)^*$ as annihilator bundles of $L^+$ and $L^-$: $$D^+_x=\{s\in (V^+_x)^*\ :\ s(L^+_x)=0\}\ \ \ \ \ \  D^-_{\tilde{x}}=\{\tilde{s}\in (V^-_{\tilde{x}})^*\ :\ \tilde{s}(L^-_{\tilde{x}})=0\}.$$  

\begin{prop}\label{main}
 The hypersurfaces $D^+\cap U^+\subset Z$ and $D^-\cap U^-\subset Z$ define up to a sign an integrable $S^1$-invariant complex structure on the quaternionic manifold $M$ obtained from the twistor space $Z$.
\end{prop}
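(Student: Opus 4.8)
The plan is to exhibit the hypersurfaces $D^+\cap U^+$ and $D^-\cap U^-$ as a single closed complex hypersurface $\mathcal{H}\subset Z$ which is invariant under the $\mathbb{C}^*$-action and compatible with the real structure, and then to invoke the standard twistor correspondence: a divisor in the twistor space meeting each real twistor line in a single point determines a complex structure compatible with $Q$, and the $\mathbb{C}^*$-invariance of $\mathcal{H}$ yields $S^1$-invariance of that complex structure on $M$. Concretely, recall from Section~\ref{bun} that $Z$ is glued from $U^+\subset(V^+)^*$ and $U^-\subset(V^-)^*$ via $\phi_-\circ *\circ\phi_+^{-1}$, and that a point of the quaternionic manifold $M$ corresponds to a real twistor line, i.e. a section of the twistor fibration over $\mathbb{CP}^1$ which is the image, under the blow-down maps, of a projective line in the fibre of $\mathcal{P}$ over a point $(x,\tilde x)$ near the diagonal, with $\tilde x=\overline{x}$. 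Under the identification of such a line with $\mathbb{P}(\mathcal{L}_{1,0}\oplus\mathcal{L}_{0,1})_{(x,\overline x)}\cong\mathbb{CP}^1$, the point "$0$" is $S$ with complex structure $J$ and the point "$\infty$" is $S$ with $-J$.

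First I would check that the two pieces $D^+\cap U^+$ and $D^-\cap U^-$ glue to a well-defined hypersurface in $Z$. This amounts to showing that the gluing map $\phi_-\circ *\circ\phi_+^{-1}$ carries $D^+$ (outside the zero section) to $D^-$. Since $D^+$ is the annihilator of the line subbundle $L^+\subset V^+$ and $D^-$ the annihilator of $L^-\subset V^-$, and since $L^+$, $L^-$ are by the Lemma exactly the subbundles of affine sections obtained from the \emph{same} family of $\nabla^D_w$-parallel sections $cl_w$ along the $(1,0)$- and $(0,1)$-leaves, the dualization $*$ that swaps $\mathcal{L}_{1,0}\otimes(\mathcal{L}_{0,1})^*$ with $(\mathcal{L}_{1,0})^*\otimes\mathcal{L}_{0,1}$ intertwines the two evaluation maps $\phi_\pm$ precisely so that vanishing of $s$ on $L^+$ corresponds to vanishing of the dual functional on $L^-$; I would spell this out using the explicit formulas for $\phi_+$ and $\phi_-$. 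This produces a closed complex hypersurface $\mathcal{H}\subset Z$.

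Next I would verify the two properties that make $\mathcal{H}$ a twistor divisor defining a complex structure. (a) \emph{$\mathbb{C}^*$-invariance:} $D^+$ and $D^-$ are linear subbundles of the total spaces of $(V^+)^*$ and $(V^-)^*$, hence invariant under fibrewise scalar multiplication and its inverse; so $\mathcal{H}$ is preserved by the $\mathbb{C}^*$-action on $Z$, and therefore by the real subgroup $S^1$. (b) \emph{Degree-one intersection with real twistor lines:} a real twistor line sits inside the fibre $\mathbb{P}(\mathcal{L}_{1,0}\oplus\mathcal{L}_{0,1})_{(x,\overline x)}$; I must show $\mathcal{H}$ meets it in exactly one point. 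On the $U^+$-side the line is parametrized (away from $\infty$) by the fibre of $(\mathcal{L}_{1,0})^*\otimes\mathcal{L}_{0,1}$ at $(x,\overline x)$, and a point $l$ of it maps under $\phi_+$ to the functional $f\mapsto f(x)\otimes l$ on $V^+$; this functional annihilates $L^+$ exactly when $l_w(x)\otimes l=0$ for the generator $l_w$ of $L^+$, i.e. exactly when $l=0$ — but that is the zero section, which is $S$ with complex structure $J$, and on the other chart the analogous computation with $\phi_-$ shows the line also meets the $D^-$-locus only at its own zero section $\infty$ (with complex structure $-J$). Thus $\mathcal{H}$ meets each real twistor line in the single point "$0$" when described from $U^+$ and in "$\infty$" when described from $U^-$; since these are the two antipodal fixed points of the rotation, $\mathcal{H}$ selects, up to the overall sign $J\leftrightarrow -J$, one point of the $\mathbb{CP}^1$ of complex structures over each point of $M$, i.e. a section of the twistor fibration, i.e. an almost complex structure $J_D$ compatible with $Q$.

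The main obstacle — and the step I expect to require the most care — is (b), the degree-one / transversality claim: one must be sure that $\mathcal{H}$ genuinely cuts each real twistor line in a \emph{reduced} single point (degree one in $\mathcal{O}(1)$), rather than, say, a single point with multiplicity, and that after the gluing the two local descriptions patch to a global section of the twistor bundle rather than something with poles; here the holomorphicity and flatness of $\nabla^D_w$ along the leaves (which is where the type-$(1,1)$ curvature hypothesis on $D$ enters, via the induced connection on $\mathcal{O}(1)$) is exactly what guarantees $L^\pm$ are honest affine line subbundles, so the annihilators $D^\pm$ are smooth rank-$n$ subbundles transverse to the fibres. Integrability of $J_D$ is then automatic: it is a general feature of the twistor correspondence (Theorem~\ref{TwSal} and Salamon's theorem) that a complex hypersurface in $Z$ transverse to the real twistor lines induces an \emph{integrable} section of $Q$, since the almost complex structure is recovered from genuine complex-geometric data on $Z$. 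Finally $S^1$-invariance of $J_D$ is inherited from the $\mathbb{C}^*$-invariance established in (a), and $J_D|_S=J$ follows because the hypersurface passes through the zero section of $U^+$, which is precisely $S$ equipped with $J$. This completes the argument.
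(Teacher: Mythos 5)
Your overall picture --- each of the two loci picks out one point on each real twistor line, the two points are antipodal, $\mathbb{C}^*$-invariance follows from the fact that $D^\pm$ are linear subbundles, and integrability comes from the general twistor correspondence --- is essentially the right one, and your parts (a) and the integrability discussion agree with the paper's argument. However, the first step of your plan, namely that $D^+\cap U^+$ and $D^-\cap U^-$ glue under $\phi_-\circ *\circ(\phi_+)^{-1}$ into a single hypersurface $\mathcal{H}$, is false, and if you carried out the explicit computation you propose it would refuse to close. A point of the chart overlap $U^+\cap U^-$ is by construction in the image of $\phi_+$ away from the zero section, i.e.\ a functional of the form $f\mapsto f(x)\otimes l$ with $l\neq 0$; such a functional annihilates $L^+_{\tilde x}=\mathbb{C}\cdot l_w$ if and only if $l_w(x)=0$, which never happens because $l_w$ is a parallel section of a flat line-bundle connection and hence nowhere vanishing (your own computation in (b) is exactly this observation, restricted to a canonical twistor line). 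Consequently $D^+$ minus the zero section of $(V^+)^*$ consists precisely of the \emph{non-evaluation} functionals and is disjoint from the chart overlap, so the transition map is not even defined on it: $D^+\cap U^+$ and $D^-\cap U^-$ are two \emph{disjoint} hypersurfaces in $Z$, not two charts of one. Relatedly, a single hypersurface meeting every real twistor line in two antipodal points would not define a complex structure at all; the ``up to a sign'' in the statement refers to there being two hypersurfaces, each selecting one of $\pm J_D$.

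The paper closes this step differently: $D^+$ and $D^-$ are exchanged by the real structure of $Z$ (the antiholomorphic isomorphism between $V^+$ and $V^-$ induced by the real structure of $S\times\overline{S}$, Proposition 4.3 of \cite{BC}), so each is separately a holomorphic hypersurface injective under the twistor projection, each defines by \cite{AMP} a compatible integrable complex structure on an open subset of $M$, and conjugacy forces the two structures to be negatives of one another. You should adopt that argument in place of the gluing claim. A secondary point: your degree-one check in (b) is carried out only for the canonical twistor lines over the diagonal, i.e.\ over points of $S$; for the structure to exist on a neighbourhood of $S$ in $M$ one also needs the nearby generic real twistor lines to meet $D^+$ in at most one point, which is exactly what the ``injective under the twistor projection, hence a complex structure on an open subset'' formulation of \cite{AMP} takes care of.
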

\begin{proof}
  As any holomorphic hypersurface of the twistor space that is injective under the twistor projection defines a compatible integrable complex structure on an open subset of the quaternionic manifold $M$  (\cite{AMP}). Moreover, $D^+$ and $D^-$ are conjugated by the real structure on $Z$, hence they  up to a sign give the same complex structure (the real structure is induced by the antyholomorphic isomorphism between $V^+$ and $V^-$  induced by the canonical real structure on $S\times\overline{S}$ - see Proposition 4.3 in \cite{BC}). Therefore, it is enough to show that the $S^1$-action preserves the complex structure. This is equivalent to the statement that the corresponding hypersurface in the twistor space is preserved by the corresponding local $\mathbb{C}^*$-action. The $\mathbb{C}^*$-action is in our case induced by the scalar multiplication in the fibres of $V^+$ and $V^-$ and $D^+$ and $D^-$ are (open subsets) of vector subbundles hence they are preserved by the (local)   $\mathbb{C}^*$-action.
\end{proof}

\begin{rem}\label{uni}
The equations used to construct hyperspaces $D^+$ and $D^-$ from a connection $D$ in the c-projective class use only the induced connection on $\mathcal{O}(1)$. However, as visible from Remark 2.4 of \cite{CEMN}, the connections on the canonical bundle (and hence of its roots bundles) induced by different connections from the fixed c-projective class differ. Therefore, the complex structures obtained using different representatives in the c-projective class are different. 
\end{rem}
\begin{prop}\label{con}
    The $GL(n,\mathbb{H})U(1)$ connection corresponding to the complex structure obtained in  Proposition \ref{main} is invariant with respect to the $S^1$-action.
\end{prop}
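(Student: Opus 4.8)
The plan is to combine the uniqueness theorem of Alekseevsky--Marchiafava--Pontecorvo with the $S^1$-equivariance of the underlying data. By that theorem, once we have produced a compatible integrable complex structure $J_D$ on the (open subset of the) quaternionic manifold $M$, there is a \emph{unique} quaternionic connection $\widetilde D$ on that open set with $\widetilde D J_D = 0$, and this $\widetilde D$ is the $GL(n,\mathbb{H})U(1)$ connection in question. So the claim reduces to showing that $\widetilde D$ is preserved by the $S^1$-action.

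First I would record that the quaternionic $S^1$-action on $M$ preserves the quaternionic structure $Q$ (this is built into the Feix--Kaledin construction: it comes from a local $\mathbb{C}^*$-action on $Z$ compatible with the real structure and transverse to real twistor lines), and that, by Proposition \ref{main}, it also preserves the complex structure $J_D$. Then I would argue by naturality: for any $g$ in the $S^1$, the pushforward connection $g_*\widetilde D$ is again torsion-free (diffeomorphisms preserve torsion-freeness), satisfies $(g_*\widetilde D)(g_* Q) \subset g_* Q$, i.e. $(g_*\widetilde D)Q \subset Q$ since $g_*Q = Q$, so it is again a quaternionic connection; and it annihilates $g_* J_D = J_D$. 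Hence $g_*\widetilde D$ is a quaternionic connection preserving $J_D$, and by the uniqueness clause of the Alekseevsky et al. theorem $g_*\widetilde D = \widetilde D$. As this holds for all $g \in S^1$, the connection $\widetilde D$ is $S^1$-invariant.

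For completeness I would also remark that the open subset of $M$ on which $J_D$ (and hence $\widetilde D$) is defined is itself $S^1$-invariant: it is the image under the twistor projection of the $\mathbb{C}^*$-invariant hypersurface $D^+ \cap U^+$ (equivalently $D^- \cap U^-$), on which the twistor projection is injective, so the pushforward $g_*\widetilde D$ really is defined on the same domain and the comparison above makes sense. The only mild subtlety — and the place where one must be slightly careful — is that the Feix--Kaledin $\mathbb{C}^*$-action on $Z$ is only \emph{local}, so strictly speaking the invariance statement is an infinitesimal one near $S$; but the compact subgroup $S^1$ does act genuinely (it preserves the real twistor lines, hence descends to $M$), so the argument above applies verbatim to $g \in S^1$ near the identity and then to all of $S^1$ by the group property, near $S$.

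I do not expect a genuine obstacle here: the entire content is the uniqueness half of the Alekseevsky--Marchiafava--Pontecorvo theorem plus the elementary observation that pushing forward by a symmetry of all the relevant structures produces a connection with the same defining properties. The only thing requiring a little attention is bookkeeping about domains of definition and the local-versus-global nature of the $\mathbb{C}^*$-action, which I would dispatch with the remark above.
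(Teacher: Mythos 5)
Your argument is correct and is essentially the paper's own proof: both push the connection forward by the circle action, observe that the result is again a torsion-free quaternionic connection preserving $J_D$ (since the action preserves $Q$ and $J_D$), and invoke the uniqueness clause of the Alekseevsky--Marchiafava--Pontecorvo theorem to conclude invariance. Your extra bookkeeping on domains and the local nature of the $\mathbb{C}^*$-action is a reasonable refinement but does not change the argument.
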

\begin{proof}
  Let $\mathcal{D}^J$ be the $GL(n,\mathbb{H})U(1)$ connection with $\mathcal{D}^J J_D=0$. We know that for a generator $X$ of the action the push forward by the flow of $\mathcal{D}^J$ along $X$ by any fixed time  is some quaternionic connection $\mathcal{D}'$ (since $X$ preserves the quaternionic structure). Moreover, $\mathcal{D}'$ preserves the underlying push forward of $J$.  But as $L_XJ=0$, we get that  $\mathcal{D}'J=0$. As  $\mathcal{D}^J$ is the unique connection with  $\mathcal{D}^J J_D=0$, we get that  $\mathcal{D}^J =\mathcal{D}'$. 
\end{proof}
\begin{thm}
    Let $(M,Q)$ be a quaternionic $4n$-manifold with a rotating quaternionic circle action and $S$ a $2n$-dimensional component of the fixed points of the action. Suppose that $(M,Q)$ is not locally hypercomplex. Denote by $[D]_c$ the c-projective structure induced on $S$ by the quaternionic structure on $M$ and let $J$ be the induced complex structure on $S$. Then,  the set of real analytic $GL(n,\mathbb{H})U(1)$ connections on $M$ compatible with both $Q$ and  the circle action and preserving a complex structure that extends $J$ is parametrized by the set of real-analytic connections $D\in[D]_c$ with real analytic curvature of type $(1,1)$.
\end{thm}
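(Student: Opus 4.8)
The plan is to package together the results already established in this section into a single bijection. The key structural ingredients are: (i) by the final theorem of Section~2 (the characterization theorem from \cite{BC}), since $(M,Q)$ has a rotating circle action with $2n$-dimensional fixed component $S$ and no triholomorphic points, it arises via the quaternionic Feix--Kaledin construction from $(S,J,[D]_c,\mathfrak{L},\nabla)$ for some holomorphic line bundle $\mathfrak{L}$ with holomorphic connection $\nabla$; the hypothesis that $M$ is not locally hypercomplex is what rules out the triholomorphic case and guarantees the rotating hypothesis globally along $S$; (ii) Proposition~\ref{main} and Proposition~\ref{con} together show that each real analytic $D\in[D]_c$ with type $(1,1)$ curvature produces, via the subbundles $D^\pm$, an $S^1$-invariant $GL(n,\mathbb{H})U(1)$ connection $\mathcal{D}^{J_D}$ on $M$ preserving a complex structure $J_D$ extending $J$; (iii) Remark~\ref{uni} shows the assignment $D\mapsto J_D$ is injective, since distinct representatives induce distinct connections on $\mathcal{O}(1)$ and hence distinct hypersurfaces $D^\pm$.

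First I would set up the map $\Phi$ from the set of real analytic connections $D\in[D]_c$ with type $(1,1)$ curvature to the set of real analytic $S^1$-invariant $GL(n,\mathbb{H})U(1)$ connections on $M$ preserving a complex structure extending $J$, by $\Phi(D)=\mathcal{D}^{J_D}$; well-definedness and $S^1$-invariance are exactly Propositions~\ref{main} and~\ref{con}, and real analyticity follows because the whole construction (affine sections, $V^\pm$, the subbundles $L^\pm$, $D^\pm$) is holomorphic and the twistor-to-manifold passage preserves real analyticity. Injectivity of $\Phi$ is Remark~\ref{uni}: if $\mathcal{D}^{J_D}=\mathcal{D}^{J_{D'}}$ then $J_D=J_{D'}$ (the connection determines its parallel complex structure), so the hypersurfaces agree, forcing the induced connections on $\mathcal{O}(1)$ to agree, hence $D=D'$ on $S$; but a connection in a c-projective class is determined by its restriction to $S$ together with the class, so $D=D'$.

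The main work is surjectivity. Given a real analytic $S^1$-invariant $GL(n,\mathbb{H})U(1)$ connection $\mathcal{D}$ on $M$ preserving a complex structure $\tilde J$ with $\tilde J|_S=J$, I must produce a connection $D\in[D]_c$ with type $(1,1)$ curvature such that $\Phi(D)=\mathcal{D}$. The complex structure $\tilde J$ corresponds (by \cite{AMP}) to a holomorphic hypersurface $H\subset Z$ injective over $M$, and $S^1$-invariance of $\tilde J$ makes $H$ invariant under the local $\mathbb{C}^*$-action, i.e.\ invariant under fibrewise scaling in $(V^+)^*$ and $(V^-)^*$. A $\mathbb{C}^*$-invariant hypersurface in a vector bundle that is transverse to and contains the zero section (the latter because $\tilde J$ extends $J$, so $H$ meets the twistor lines over $S$ in the distinguished points) must be a linear subbundle; thus $H\cap U^+ = D^+$ for some rank-$n$ subbundle $D^+\subset(V^+)^*$, which is the annihilator of a line subbundle $L^+\subset V^+$, and similarly on the $(-)$ side, with $L^-$ the real conjugate of $L^+$. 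The content is then to show $L^+$ is built from a c-projective connection as in the Lemma: a line subbundle of $V^+$ is, fibrewise, a line of affine sections, i.e.\ a one-dimensional space of parallel sections of a flat connection along the $(1,0)$-leaves of the twisted tractor type; reversing the Lemma's argument, such a line determines a flat line connection $\nabla^D_w$ on $\mathcal{L}_{1,0}\otimes(\mathcal{L}_{0,1})^*$ along each leaf, and the requirement that these fit together consistently over $S\times\overline S$ (equivalently, that $L^+$ and $L^-$ are exchanged by the real structure) forces them to come from a single connection on $\mathcal{O}(1)$, which by the c-projective tractor calculus (Equation~\ref{tra}) is the $\mathcal{O}(1)$-connection induced by a unique $D\in[D]_c$ with type $(1,1)$ curvature. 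Finally $\Phi(D)=\mathcal{D}$ because both preserve $\tilde J=J_D$ and the connection preserving a given complex structure is unique. The hard part will be this surjectivity step — specifically, verifying that an arbitrary $\mathbb{C}^*$-invariant twistor hypersurface through the distinguished sections really is linear and that the resulting leafwise line connections glue to an $\mathcal{O}(1)$-connection with type $(1,1)$ curvature, rather than merely leafwise-flat data that fails to integrate; this is where the type $(1,1)$ hypothesis on the ambient curvature and the real-structure compatibility must be used essentially.
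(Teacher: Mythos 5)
Your proposal follows essentially the same route as the paper's proof: reduce to surjectivity via Proposition \ref{con} and Remark \ref{uni}, pass to the $\mathbb{C}^*$-invariant twistor hypersurface determined by the parallel complex structure, observe that its fibrewise intersections with $(V^+)^*$ (and $(V^-)^*$) are vector subspaces and hence define line subbundles of affine sections, and invert the construction of $L^\pm$ to recover $D\in[D]_c$. The gluing step you correctly flag as the hard part is exactly what the paper delegates to the proof of Theorem 2 of \cite{Bo}.
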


\begin{proof}
By Proposition \ref{con} and Remark \ref{uni}, it is enough to show that any $GL(n,\mathbb{H})U(1)$ connection with required properties can be obtained using the construction from this section. Let $\mathcal{D}$ be such a connection. Then, there exists a complex structure $\mathcal{J}$ with $\mathcal{D}\mathcal{J}=0$. Moreover, such $\mathcal{J}$ is unique up to a sign unless $\mathcal{D}$ is an Obata connection of a hypercomplex structure. As $\mathcal{D}$ is $S^1$-invariant then so is $\mathcal{J}$. Hence $\mathcal{J}$ corresponds to a hypersurface in $Z$ (we abuse the notation and denote it also by $\mathcal{J}$), which is $\mathbb{C}^*$ invariant and transversal to real twistor lines. By assumption we know that $Z$ can be obtained by quaternionic Feix--Kaledin construction. Moreover $\mathcal{J}|_S=J$ hence the surface $\mathcal{J}$ contains the zero section of a vector bundle $(V^+)^*$. For any fibre $(V^+)^*_x$ of the bundle, as the $\mathbb{C}^*$-action preserves the fibres we have that  $(V^+)^*_x\cap\mathcal{J}$ is $\mathbb{C}^*$-invariant hence it is a vector subspace. Therefore, it defines a $1$-dimensional vector subspace of $V^+$ and hence $1$ dimensional subspace of affine sections of $\mathcal{L}_{1,0}\otimes(\mathcal{L}_{0,1})^*$ along the corresponding leaf. Similarly, as in the proof of Theorem 2 from \cite{Bo}, and taking into account the analogous argument for $-\mathcal{J}$ and $V^-$, we can use these data to obtain connections on $\mathcal{L}_{1,0}\otimes(\mathcal{L}_{0,1})^*$ and $(\mathcal{L}_{1,0})^*\otimes\mathcal{L}_{0,1}$ which can be obtained using the construction provided in this section.
\end{proof}
    \begin{thm}\label{wn}
Let $(M,g)$ be a quaternion-K\"ahler $4n$-manifold with rotating isometric circle action and suppose that $S$ is a $2n$-dimensional component of the fixed point set of the action.  Then, the distinguished $U^*(2n)$ connection $\mathcal{D}$ obtained by Battaglia \cite{Bat} and studied by Hitchin \cite{Hit3} is the unique $U^*(2n)$ connection which agrees with the Levi-Civita connection of $g$ on $S$.
    \end{thm}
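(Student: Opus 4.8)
The plan is to identify the Battaglia--Hitchin connection as a special case of the family constructed above and then pin it down uniquely by the initial condition on $S$. First I would recall that a quaternion-K\"ahler manifold carries a canonical twistor space $Z$ with holomorphic contact structure, and since the isometric circle action is rotating with $S$ a maximal component of the fixed point set and $(M,g)$ is not locally hyperk\"ahler (quaternion-K\"ahler with nonzero scalar curvature is never locally hypercomplex), Theorem~\ref{wn} places us exactly in the setting of the quaternionic Feix--Kaledin construction: $Z$ is obtained by gluing $U^+$ and $U^-$, and $S$ inherits a c-projective structure $[D]_c$, a complex structure $J$, and a holomorphic line bundle $\mathfrak{L}$ with holomorphic connection. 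So every $U^*(2n)$ connection on $M$ compatible with $Q$, the circle action, and preserving a complex structure extending $J$ arises, by the preceding theorem together with the volume-preserving refinement, from a connection $D\in[D]_c$ with type $(1,1)$ curvature whose induced connection on the appropriate density bundle is compatible with the $SL(n,\mathbb{H})$ (rather than merely $GL(n,\mathbb{H})$) reduction.

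Next I would compute which representative $D\in[D]_c$ the Battaglia--Hitchin connection corresponds to. The key point is that on $S$ the Levi-Civita connection $\nabla^g$ of $g$ restricts (or rather, the quaternionic structure restricts) to a connection in $[D]_c$: indeed $S$ is totally complex and totally geodesic, so $\nabla^g|_S$ preserves $J$ and lies in the c-projective class. Thus there is a distinguished representative $D_0:=\nabla^g|_S$. I would then invoke Hitchin's result, quoted in the introduction, that the Battaglia connection has holonomy in $U^*(2n)=SL(n,\mathbb{H})U(1)$, i.e.\ it preserves a volume form; combined with the fact (Battaglia \cite{Bat}) that this distinguished complex structure and connection restrict on $S$ to the K\"ahler data of $g|_S$, this forces the corresponding representative in $[D]_c$ to be exactly $D_0=\nabla^g|_S$. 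Conversely, running the construction of Proposition~\ref{main} and Proposition~\ref{con} with $D=D_0$ produces an $S^1$-invariant $GL(n,\mathbb{H})U(1)$ connection $\mathcal{D}$ whose complex structure $J_{D_0}$ restricts to $J$ on $S$ and whose restriction to $S$ agrees with $\nabla^g$; one then checks the volume form is preserved, so in fact $\mathcal{D}$ has holonomy in $U^*(2n)$, and uniqueness within the family (Remark~\ref{uni}: distinct representatives give distinct connections) identifies it with Battaglia's.

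To finish, I would argue uniqueness directly from the parametrization: by the preceding theorem the $U^*(2n)$ connections with the stated compatibility properties are indexed by a subset of $\{D\in[D]_c : \text{curvature of type }(1,1)\}$, and by Remark~\ref{uni} the assignment $D\mapsto\mathcal{D}$ is injective. Since the restriction of $\mathcal{D}$ to $S$ recovers the induced connection of $D$ on $TS$ (the zero section of $(V^+)^*$ is $S$ with its c-projective data, and the construction of $D^+$ was built from the tractor connection determined by $D$), two such connections agreeing with $\nabla^g|_S$ on $S$ must come from the same representative $D=\nabla^g|_S$, hence coincide. The main obstacle I anticipate is the bookkeeping in the step ``the restriction of $\mathcal{D}$ to $S$ equals $D$'': one must trace through how the density-bundle connection induced by $D$ on $\mathcal{O}(1)$, after being complexified, tensored with $\nabla$, turned into the subbundle $L^+\subset V^+$ and thence into the hypersurface $D^+\subset Z$, reappears as the $(1,1)$-part of the Obata-type connection on $M$ restricted to the totally geodesic fixed locus $S$ --- and to match this cleanly with the known formula for $\nabla^g|_S$ one needs the precise normalization relating the quaternion-K\"ahler scalar curvature to the choice of $\mathfrak{L}$, which is exactly the input that makes the holonomy reduce to $U^*(2n)$ rather than $GL(n,\mathbb{H})U(1)$.
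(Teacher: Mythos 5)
Your proposal has a genuine gap at its central step. Both the identification of Battaglia's connection with the one built from $D_0=\nabla^g|_S$ and your uniqueness argument rest on the claim that ``the restriction of $\mathcal{D}$ to $S$ recovers the induced connection of $D$ on $TS$,'' which you describe as bookkeeping to be traced through the construction. This is not bookkeeping: it is precisely the substantive content of the theorem, and the paper explicitly states (in the remark immediately following the proof) that it does \emph{not} know how to prove this for a general representative $D\in[D]_c$, because outside the quaternion-K\"ahler case there is no explicit description of the connection's distribution on the twistor space. Relatedly, your appeal to ``the fact (Battaglia) that this distinguished complex structure and connection restrict on $S$ to the K\"ahler data of $g|_S$'' assumes the conclusion: the agreement of the Battaglia--Hitchin connection with the Levi-Civita connection on $S$ is exactly what is being proven, and is not supplied by the cited sources.

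What actually closes the gap in the quaternion-K\"ahler case is a twistor-space argument you do not have. The Levi-Civita connection corresponds to the holomorphic contact distribution $D^g$ on $Z$, and Proposition 2 of \cite{Bo} shows $D^g$ is tangent to the hypersurfaces $D^\pm$ along the zero sections of $(V^\pm)^*$. One then uses $\mathbb{C}^*$-invariance of both connection distributions (the action acts transitively on each canonical real twistor line minus its two fixed points, by scalar multiplication in the fibres) to propagate this tangency to the affine translates of $D^\pm_{\overline{x}}$ along the whole real twistor line $\sigma_x$ over a point of $S$; transversality of the two affine families forces the rank-$n$ distributions of the two connections to coincide along $\sigma_x$, and Proposition 5.3 of Alekseevsky--Marchiafava \cite{Alex} then converts agreement of distributions along the twistor lines over $S$ into agreement of the connections on $S$. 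Without this (or an equivalent explicit comparison), your proof does not establish either the existence or the uniqueness statement.
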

\begin{proof}
   Comparing the complex structure from Lemma 1 of \cite{Bo} with the complex structure that we obtain in  Proposition  \ref{main} in the particular case when we start from the connection on $S$ that is induced by the Levi-Civita connection of the quaternion-K\"ahler metric, we immediately see that the structures in this case are the same.  In \cite{Bo} it is also shown that this complex structure is also the same as the complex structure obtained by Battaglia \cite{Bat} and studied by Hitchin \cite{Hit3} (and given by the zero set of the moment map on the twistor space). In Proposition 2 of \cite{Bo} it is shown that the holomorphic contact distribution $D^g$ coming from the quaternion-K\"ahler connection is tangent to $D^+$ and $D^-$ along the zero sections. Therefore, along the zero sections of $(V^+)^*$ and $(V^-)^*$ the distributions $D^g$ and the one given by the $U^*(2n)$ connection coincide. We will show that they also coincide along the real twistor lines corresponding to the points of $S\subset M$.  
   
   In Proposition 2 of \cite{Bo} is shown that for any $\overline{x}\in\overline{S}$, the vector subspace $D_{\overline{x}}^+$ (which is the intersection of the hypersurface $D^+$ with the fibre over $\overline{x}$ of $(V^+)^*$) is tangent to  $D^g$ and the analogous fact is true for $D^-$.

   Fix a canonical real twistor line $\sigma_x$, i.e., the twistor line that is an image in $Z$ of a fibre of the bundle $\mathcal{P}$ (see Section \ref{p}) over a point lying in the real submanifold $S_{\mathbb{R}}\subset S\times \overline{S}$. In $(V^+)^*$ (resp. in $(V^-)^*$) it is the line given explicitly by $\{(\overline{x},f\mapsto \lambda f(x))\}\subset (V^+)_{\overline{x}}$ for  $\lambda \in \mathbb{C}$ (resp. $\{(x,\tilde{f}\mapsto \tilde{\lambda} \tilde{f}(\overline{x}))\}\subset (V^+)_{\overline{x}}$). The $\mathbb{C}^*$-action preserves the twistor line (by explicit check or by equivalently the fact that this twistor line corresponds to a fixed point of the underlying $S^1$-action on $M$). Moreover, after removing two fixed points lying on the zero sections of  $(V^+)^*$ and $(V^-)^*$, the action acts transitively with the limits as $\lambda\in \mathbb{C}^*$ tends to $0$ or $\infty$ being the two fixed points.

   As both the Levi-Civita connection and the $U^*(2n)$ connection are $S^1$-invariant, the corresponding distributions on the twistor space are $\mathbb{C}^*$-invariant.  As the action is the scalar multiplication in the fibres of $(V^+)$ and $(V^-)$, we get that the affine subspaces of $(V^+)_{\overline{x}}$ modeled on $D_{\overline{x}}^+$ are also tangent to both distributions along $\sigma_x\cap (V^+)^*$ and the same is true for affine subspaces modeled on $D_x^-$ along $\sigma_x\cap (V^-)^*$. 
   As the affine subspaces of $(V^+)^*$ and of $ (V^-)^*$ are transversal, they generate the rank $n$-distribution along $\sigma_x\cap (V^+)^*\cap (V^-)^*$ which is tangent, and hence equal to the two connection distributions. We have already shown that the two connection distribution coincide also at points on the zero sections we conclude that indeed 
they coincide along the real twistor lines corresponding to the points of $S\subset M$.  

    Now we can apply the Proposition 5.3 from \cite{Alex} to conclude that the two connections agree on $S$.  
    
\end{proof}
\begin{rem}
It would be nice to prove an analogous result for any connection, namely that the  $GL(n,\mathbb{H})U(1)$ connection obtained from $D\in[D]_c$ restricts to $D$
 on $S$. However, we cannot apply methods from Theorem \ref{wn} as, except from quaternion-K\"ahler case,  we do not have explicit connection distributions on twistor space. 
  
\end{rem}
Using result of Hitchin \cite{Hit3} we have shown that in a particular case when $D$ is K\"ahler and the obtained manifold is quaternion-K\"ahler (compatible with $D$), the obtained $GL(n,\mathbb{H})U(1)$ connections have holonomy which is in fact contained in $U^*(2n)$. Motivated by this we would like to conclude with the following question:
\begin{que}
What are sufficient and necessary conditions on $(S,J,[D]_c,\mathfrak{L},\nabla, D) $ that will imply that the connection preserving the complex structure constructed in Proposition \ref{main} has holonomy contained in $U^*(2n)$?
\end{que}

\end{document}